\title[On series identities of Gosper and integrals...]
{On series identities of Gosper and integrals of Ramanujan theta function $\psi(q)$}
 \theoremstyle{definition}
  \theoremstyle{plain}
  \newtheorem{theorem}    {Theorem}
  \newtheorem{corollary}  {Corollary}
  \theoremstyle{definition}
  \newtheorem{remark}{Remark}
  \newcommand{\fr}{\frac}
\begin{document}
  \author{Mohamed El Bachraoui}
  \address{Department of  Mathematical Sciences,
 United Arab Emirates University, PO Box 15551, Al-Ain, UAE}
 \email{melbachraoui@uaeu.ac.ae}
 \keywords{Ramanujan theta functions; Lambert series; integrals; $q$-trigonometric functions.}
 \subjclass{33E05, 11F11, 11F12}
\begin{abstract}
We prove some Lambert series which were stated by Gosper without proof or reference.
As an application we shall evaluate integrals involving Ramanujan theta function $\psi(q)$.
Furthermore, motivated by Ramanujan's identities for $q\psi^4(q^2)$ and $\fr{\psi^3(q)}{\psi(q^3)}$,
we shall evaluate the squares of $q\psi^4(q^2)$ and $\fr{\psi^3(q)}{\psi(q^3)}$ in terms of Lambert series.
\end{abstract}
\date{\textit{\today}}
\maketitle
\section{Introduction}
\noindent
Throughout the paper let $q=e^{\pi i\tau}$ with $\mathrm{Im}(\tau)>0$ guaranteeing
that $|q|<1$.
As usual, the $q$-shifted factorials for a complex number $a$ are given by
\[
(a;q)_0= 1,\quad (a;q)_n = \prod_{i=0}^{n-1}(1-a q^i),\quad
(a;q)_{\infty} = \lim_{n\to\infty}(a;q)_n.
\]
Gosper~\cite{Gosper} introduced the following $q$-analogues of $\sin(z)$ and $\cos(z)$
\begin{equation}\label{sine-cos-q-prod}
\begin{split}
\sin_q (\pi z) &= 
q^{(z-\frac{1}{2})^2} \frac{(q^{2z};q^2)_{\infty} (q^{2-2z};q^2)_{\infty}}{(q;q^2)_{\infty}^2} \\
\cos_q (\pi z) 
&= q^{z^2} \frac{(q^{1+2z};q^2)_{\infty} (q^{1-2z};q^2)_{\infty}}{(q;q^2)_{\infty}^2}
\end{split}
\end{equation}
along with the following $q$-constant
\begin{equation}\label{Piq}
\Pi_q = q^{\frac{1}{4}}\frac{ (q^2;q^2)_{\infty}^2}{(q;q^2)_{\infty}^2} =
q^{\frac{1}{4}} \psi^2(q)
\end{equation}
for which one has $\lim_{q\to 1} (1-q^2)\Pi_q = \pi$.
The Ramanujan theta functions $f(q)$, $\varphi(q)$, and $\psi(q)$ are given by
\[
f(-q)= (q;q)_{\infty},\quad \varphi(q)= \fr{(-q;-q)_{\infty}}{(q;-q)_{\infty}},\quad
\psi(q) = \fr{(q^2;q^2)_{\infty}}{(q;q^2)_{\infty}} 
\]
but our focus in this work is mainly on $\psi(q)$.
We refer the reader to
Berndt~\cite{Berndt-1} and Cooper~\cite{Cooper} for a survey on Ramanujan theta functions, the connections between them, and their applications.
Our primary goal in this paper is to confirm the following formulas which were stated without proof
or reference by Gosper~\cite[p. 102]{Gosper} and to give some of their applications.
\begin{theorem}\label{Gosper-sums} We have
\[
\begin{split}
{\mathrm(a)\qquad} &
\sum_{n=1}^{\infty}\frac{q^n}{(1-q^n)^2} - 2 \sum_{n=1}^{\infty}\frac{q^{2n}}{(1-q^{2n})^2} =
\frac{1}{24}\left( \frac{\Pi_q^4}{\Pi_{q^2}^2}-1 \right) + \frac{2}{3}\Pi_{q^2}^2. \\
{\mathrm(b)\qquad} &
\sum_{n=1}^{\infty}\frac{q^n}{(1-q^n)^2} - 3 \sum_{n=1}^{\infty}\frac{q^{3n}}{(1-q^{3n})^2} =
\fr{(\Pi_q^2 + 3\Pi_{q^3}^2)^2}{12\Pi_q \Pi_{q^3}} - \fr{1}{12}. \\
{\mathrm(c)\qquad} &
\sum_{n=1}^{\infty}\frac{q^n}{(1-q^n)^2} - 4 \sum_{n=1}^{\infty}\frac{q^{4n}}{(1-q^{4n})^2} =
\fr{1}{8}\left( \fr{\Pi_q^4}{\Pi_{q^2}^2}-1\right). \\
{\mathrm(d)\qquad} &
\sum_{n=1}^{\infty}\frac{q^{2n}}{(1-q^{2n})^2} - 9 \sum_{n=1}^{\infty}\frac{q^{18n}}{(1-q^{18n})^2} =
\frac{\Pi_{q^3}^3}{\Pi_{q}} + \frac{1}{3}\left( \frac{\Pi_{q^3}^3}{\Pi_{q^9}}-1 \right). \\
{\mathrm(e)\qquad} &
\sum_{n=1}^{\infty}\frac{q^{2n-1}}{(1-q^{2n-1})^2} - 3 \sum_{n=1}^{\infty}\frac{q^{6n-3}}{(1-q^{6n-3})^2} =
\Pi_{q^3}\Pi_{q}. \\
{\mathrm(f)\qquad} &
6 \sum_{n=1}^{\infty}\frac{q^{4n-2}}{(1-q^{2n-1})^4} + \sum_{n=1}^{\infty}\frac{q^{2n-1}}{(1-q^{2n-1})^2} =
\Pi_q^4.
\end{split}
\]
\end{theorem}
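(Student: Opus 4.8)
\medskip
\noindent\textbf{Proof outline.}\quad
The plan is to rewrite both sides of each identity as an explicit $q$-series with arithmetic coefficients and then match them by elementary divisor-sum bookkeeping. For the left-hand sides, recall $\sum_{n\ge1}q^n/(1-q^n)^2=\sum_{n\ge1}\sigma_1(n)q^n$; set $P(q)=1-24\sum_{n\ge1}q^n/(1-q^n)^2$; and, for part (f), use the elementary expansion $6x^2/(1-x)^4+x/(1-x)^2=\sum_{m\ge1}m^3x^m$. Multiplicativity of $\sigma_1$ then collapses each left-hand side to a single Lambert-type series: in (a) the coefficient of $q^N$ is $\sigma_1$ of the odd part of $N$; in (b) it is $\sigma_1$ of the largest divisor of $N$ prime to $3$; in (c) it is $\sigma_1(\mathrm{odd}(N))$ if $N$ is odd and $3\sigma_1(\mathrm{odd}(N))$ otherwise; in (e) it is $2^{v_2(N)}\sigma_1(M)$ with $M$ the largest divisor of $N$ prime to $6$; and in (f) the left-hand side becomes $\sum_{k\ \mathrm{odd}}\sum_{m\ge1}m^3q^{km}=\sum_{N\ge1}\bigl(\sum_{d\mid N,\ N/d\ \mathrm{odd}}d^3\bigr)q^N$.

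For the right-hand sides the starting point is $\Pi_q=q^{1/4}\psi^2(q)$ together with the classical Ramanujan identity $\psi^2(q)=\varphi(q)\psi(q^2)$, which already yields $\Pi_q^4/\Pi_{q^2}^2=\varphi^4(q)$ and $\Pi_{q^2}^2=q\psi^4(q^2)$ and thereby settles the right-hand sides of (a) and (c). For (b) I would bring in Borwein's cubic theta function $a(q)=\sum_{m,n\in\Z}q^{m^2+mn+n^2}=\varphi(q)\varphi(q^3)+4q\psi(q^2)\psi(q^6)$ and establish the factorization $\psi^4(q)+3q\psi^4(q^3)=\psi(q)\psi(q^3)a(q)$, i.e.\ $\Pi_q^2+3\Pi_{q^3}^2=q^{1/2}\psi(q)\psi(q^3)a(q)$, so the right-hand side of (b) collapses to $\tfrac1{12}(a(q)^2-1)$. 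The right-hand side of (f) is simply $\Pi_q^4=q\psi^8(q)$, that of (e) is $\Pi_{q^3}\Pi_q=q\psi^2(q)\psi^2(q^3)$, and that of (d) becomes $q^2\psi^6(q^3)/\psi^2(q)+\tfrac13\psi^6(q^3)/\psi^2(q^9)-\tfrac13$.

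It then remains to invoke the classical expansions of these theta functions: Jacobi's four-square formula $\varphi^4(q)=1+8\sum_{n\ge1}\bigl(\sum_{d\mid n,\ 4\nmid d}d\bigr)q^n$; $\psi^4(q)=\sum_{n\ge0}\sigma_1(2n+1)q^n$ (equivalently $q\psi^4(q^2)=\sum_{m\ \mathrm{odd}}\sigma_1(m)q^m$); the cubic Eisenstein identity $3P(q^3)-P(q)=2a(q)^2$; the eight-triangular-numbers formula $\psi^8(q)=\sum_{n\ge0}\bigl(\sum_{d\mid n+1,\ (n+1)/d\ \mathrm{odd}}d^3\bigr)q^n$; and the weight-$2$ Eisenstein expansions of $q\psi^2(q)\psi^2(q^3)$ and of the weight-$2$ combination of level dividing $36$ appearing in (d). With both sides of each identity now written as $\sum_N c(N)q^N$ with $c(N)$ a divisor sum, the proof is a routine verification using multiplicativity of $\sigma_1$ (resp.\ $\sigma_3$); for instance (a) reduces to checking $\sigma_1(\mathrm{odd}(N))=\tfrac13\sum_{d\mid N,\ 4\nmid d}d+\tfrac23\,[N\ \mathrm{odd}]\,\sigma_1(N)$ for all $N$, which follows at once on writing $N=2^aM$ with $M$ odd.

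I expect the genuine obstacle to be the mod-$18$ case (d): one must pin down the correct weight-$2$ Eisenstein series (equivalently a nonic eta-quotient identity for $\psi^6(q^3)/\psi^2(q)$ and $\psi^6(q^3)/\psi^2(q^9)$) and get the constants $1$ and $\tfrac13$ exactly right; the weight-$4$ identity (f) additionally needs the less standard representation formula for sums of eight triangular numbers. A uniform safety net for (d) and (f) is that, since each $P(q)-aP(q^a)$ is a holomorphic modular form of weight $2$ and $\Pi$ is a weight-$1$ eta quotient, every identity equates two holomorphic modular forms of weight $2$ (weight $4$ in (f)) on a congruence subgroup of small level, whence agreement follows from matching finitely many Fourier coefficients up to the Sturm bound.
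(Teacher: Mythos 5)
Your route is genuinely different from the paper's. The paper derives (c), (d), (e), (f) by differentiating Gosper's $q$-trigonometric identities (\ref{q-Double}), (\ref{q-Triple}), (\ref{q-Add-3}), (\ref{q-Double-5}) at $z=0$ and reading off the Taylor coefficients of $\sin_q$ and $\cos_q$ supplied by Theorem~\ref{sine-cosine-expand}; part (a) is reduced, via the same Ramanujan identity $q\psi^4(q^2)=\sum\sigma(2n+1)q^{2n+1}$ you use, to the algebraic relation (\ref{Pi-124}) among $\Pi_q,\Pi_{q^2},\Pi_{q^4}$; and part (b) is \emph{not} proved independently but deduced from (d), (e) and Theorem~\ref{Beyond-Gosper}. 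You instead translate everything into classical theta/Eisenstein language: your divisor-sum computations for the left-hand sides are correct (I checked that $\sigma(N)-2\sigma(N/2)$ is $\sigma$ of the odd part, that the mod-$6$ computation in (e) gives $2^{v_2(N)}\sigma(M)$, and that $6x^2/(1-x)^4+x/(1-x)^2=\sum_m m^3x^m$), and the right-hand-side reductions via $\Pi_q=q^{1/4}\psi^2(q)$ and $\psi^2(q)=\varphi(q)\psi(q^2)$ are sound, so (a), (c) follow from Jacobi's four-square theorem, (f) from the eight-triangular-numbers formula, and (e) from the Berndt--Ramanujan expansion of $q\psi^2(q)\psi^2(q^3)$ (which the paper itself notes is equivalent to (e)). For (b), your factorization $\psi^4(q)+3q\psi^4(q^3)=\psi(q)\psi(q^3)a(q)$ is equivalent to the known identity $a(q)=\psi^3(q)/\psi(q^3)+3q\psi^3(q^3)/\psi(q)$, and together with $a(q)^2=1+12\sum_{n\ge1}\bigl(\sigma(n)-3\sigma(n/3)\bigr)q^n$ it does give (b) directly, without the detour through (d), (e) that the paper takes. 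What your approach buys is independence from Gosper's conjectural $q$-trigonometric apparatus (the paper's proof of (e) rests on (\ref{q-Add-3})); what it costs is a heavier load of imported classical identities, each needing a precise citation or proof.

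The one genuine gap is part (d), and you have identified it yourself but not closed it: you do not produce the weight-$2$ identity expressing $q^2\psi^6(q^3)/\psi^2(q)+\tfrac13\psi^6(q^3)/\psi^2(q^9)-\tfrac13$ as the required Eisenstein combination, and the Sturm-bound fallback, while legitimate in principle, is only a proof once you (i) verify that $9P(q^{18})-P(q^2)$ and the two eta-quotients are holomorphic modular forms of weight $2$ on a common congruence subgroup, (ii) compute the bound for that level, and (iii) actually match the coefficients --- none of which is carried out. The same caveat applies, more mildly, to the auxiliary identities you invoke for (b) and (e): they are true and classical, but in the form you state them they are themselves nontrivial theorems, so as written the argument is a correct reduction rather than a complete proof. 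By contrast, the paper's mechanism for (d) is self-contained given (\ref{q-Triple}) and Corollary~\ref{derivatives-cor}: one differentiation and a comparison of the cubic terms yields (d) in a few lines.
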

To prove some of our identities we shall also need the following result which apparently is new.
\begin{theorem}\label{Beyond-Gosper}
We have
\[
\sum_{n=1}^{\infty}\Big( \fr{q^{2n}}{(1-q^{2n})^2}-\fr{q^{6n}}{(1-q^{6n})^2}-
\fr{2 q^{6n-3}}{(1-q^{6n-3})^2} \Big)
= \fr{\Pi_{q^3}^3}{\Pi_q}.
\]
\end{theorem}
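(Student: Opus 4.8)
The plan is to reduce the left-hand side to a combination of the basic Lambert series $L(x):=\sum_{n\ge 1}\frac{x^n}{(1-x^n)^2}=\sum_{n\ge 1}\sigma_1(n)x^n$ and then to recognise both sides as holomorphic modular forms of weight $2$ on $\Gamma_0(6)$, so that the identity follows by comparing a few Fourier coefficients. For the first step, note that $\{6n-3:n\ge 1\}$ is exactly the set of odd multiples of $3$, so $\sum_{n\ge 1}\frac{q^{6n-3}}{(1-q^{6n-3})^2}=L(q^3)-L(q^6)$, and therefore the left-hand side collapses to $L(q^2)-L(q^6)-2\bigl(L(q^3)-L(q^6)\bigr)=L(q^2)-2L(q^3)+L(q^6)$.

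To see that this lies in $M_2(\Gamma_0(6))$, write $q=e^{2\pi i w}$ (so $w=\tau/2$) and use $L(q^k)=\frac{1-E_2(kw)}{24}$; the constant terms cancel ($1-2+1=0$) and one gets $L(q^2)-2L(q^3)+L(q^6)=\frac{1}{24}\bigl(\tfrac12\,g_2-\tfrac23\,g_3+\tfrac16\,g_6\bigr)$ with $g_d:=E_2(w)-d\,E_2(dw)$. Each $g_d$ is a holomorphic weight-$2$ Eisenstein series on $\Gamma_0(d)$ (the non‑modular part of $E_2$ cancels in $g_d$), and since $\Gamma_0(6)\subseteq\Gamma_0(d)$ for $d\in\{2,3,6\}$, the left-hand side belongs to $M_2(\Gamma_0(6))$.

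For the right-hand side, from $\psi(q)=\frac{(q^2;q^2)_\infty^2}{(q;q)_\infty}$ one finds $\Pi_q=q^{1/4}\psi^2(q)=\frac{\eta(2w)^4}{\eta(w)^2}$ and hence $\Pi_{q^3}=\frac{\eta(6w)^4}{\eta(3w)^2}$, so that
\[
\frac{\Pi_{q^3}^3}{\Pi_q}=\frac{\eta(w)^2\,\eta(6w)^{12}}{\eta(2w)^4\,\eta(3w)^6}.
\]
I would then check, via Ligozat's criterion, that this eta-quotient is a holomorphic weight-$2$ form on $\Gamma_0(6)$: with $N=6$ and exponents $(r_1,r_2,r_3,r_6)=(2,-4,-6,12)$ one has $\sum_{d\mid 6}d\,r_d=48\equiv 0$ and $\sum_{d\mid 6}(6/d)\,r_d=0\pmod{24}$, the weight $\tfrac12\sum r_d=2$ is an integer, $\prod_{d\mid 6}d^{r_d}=2^{8}3^{6}$ is a perfect square (so the character is trivial), and at each cusp $1/d$ ($d\mid 6$) the quantity $\sum_{\delta\mid 6}\frac{\gcd(d,\delta)^2 r_\delta}{\delta}$ is nonnegative — it equals $0$ for $d=1,2,3$ and $48$ for $d=6$. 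Thus $\frac{\Pi_{q^3}^3}{\Pi_q}\in M_2(\Gamma_0(6))$ as well.

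Finally, $S_2(\Gamma_0(6))=\{0\}$ and $\dim M_2(\Gamma_0(6))=3$; equivalently, Sturm's bound for weight $2$ on $\Gamma_0(6)$ (index $12$ in $\mathrm{SL}_2(\Z)$) only requires the coefficients of $q^0,q^1,q^2$ to agree. Both sides are power series with integer coefficients equal to $q^2+O(q^3)$: the left side because $L(q^2)-2L(q^3)+L(q^6)$ has leading term $\sigma_1(1)q^2=q^2$, the right side because $\frac{\Pi_{q^3}^3}{\Pi_q}=q^2\,\frac{(q;q)_\infty^2(q^6;q^6)_\infty^{12}}{(q^2;q^2)_\infty^4(q^3;q^3)_\infty^6}$ and the infinite products have constant term $1$. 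This settles the identity. The step I expect to be most delicate is organisational rather than deep: keeping the convention $q=e^{\pi i\tau}$ consistent (the functions $L(q^k)$ and $\Pi_{q^k}$ are modular in $w=\tau/2$) while carrying out the cusp evaluations in Ligozat's criterion. A route avoiding modular forms is to write $L(q^2)-2L(q^3)+L(q^6)=\bigl(L(q^2)-3L(q^6)\bigr)-2\bigl(L(q^3)-2L(q^6)\bigr)$, apply Theorem~\ref{Gosper-sums}(b) with $q\mapsto q^2$ and Theorem~\ref{Gosper-sums}(a) with $q\mapsto q^3$, and use $\psi^2(q^3)=\varphi(q^3)\psi(q^6)$ (whence $\Pi_{q^3}^4/\Pi_{q^6}^2=\varphi^4(q^3)$) to reduce Theorem~\ref{Beyond-Gosper} to the single theta identity $\frac{(\Pi_{q^2}^2+3\Pi_{q^6}^2)^2}{12\,\Pi_{q^2}\Pi_{q^6}}-\frac{\varphi^4(q^3)}{12}-\frac43\,\Pi_{q^6}^2=\frac{\Pi_{q^3}^3}{\Pi_q}$; this still requires a level-$6$ eta-quotient verification, so it is not obviously shorter.
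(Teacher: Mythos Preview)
Your argument is correct and takes a genuinely different route from the paper. The paper proves Theorem~\ref{Beyond-Gosper} by differentiating Gosper's identity \eqref{q-Triple-2} three times at $z=0$ and then inserting the explicit values of $\sin_q'0$, $\sin_q^{(3)}0$, $\cos_q''0$ from Corollary~\ref{derivatives-cor}; after cancellations this yields the stated Lambert series directly, with no appeal to modular-form machinery. Your proof instead collapses the left-hand side to $L(q^2)-2L(q^3)+L(q^6)$, expresses it as a combination of the weight-$2$ Eisenstein series $g_d=E_2-d\,E_2(d\cdot)$ on $\Gamma_0(6)$, verifies via Ligozat that $\Pi_{q^3}^3/\Pi_q$ is also in $M_2(\Gamma_0(6))$, and finishes with Sturm's bound. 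The paper's approach is self-contained within the $q$-trigonometric framework developed in the article and avoids any reliance on the dimension of $M_2(\Gamma_0(6))$; your approach is more structural, generalises readily to other eta-quotient identities of the same shape, and makes the modular nature of both sides transparent. One caution about your closing remark: the ``route avoiding modular forms'' you sketch invokes Theorem~\ref{Gosper-sums}(b), but in the paper part~(b) is itself deduced \emph{from} Theorem~\ref{Beyond-Gosper} (together with parts~(d) and~(e)), so that alternative would be circular in the present logical architecture.
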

\begin{remark}\label{rmk:divisor}
Recall for the divisor function $\sigma(n)=\sum_{d\mid n}d$ the basic fact that
\[
\sum_{n=1}^{\infty}\sigma(n) q^n = \sum_{n=1}^{\infty}\fr{q^n}{(1-q^n)^2} = \sum_{n=1}^{\infty}\fr{nq^n}{1-q^n}.
\]
Now letting 
\[
\sigma^{\ast}_{b(a)}(n) = \sum_{\substack{d\mid n\\ \fr{n}{d}\equiv b\bmod a}} d,
\]
we clearly have $\sigma^{\ast}_{0(1)} = \sigma(n)$ and $\sigma^{\ast}_{1(2)}(2n+1)= \sigma(2n+1)$. More importantly, we can verify that
\begin{equation}\label{divisor-generating}
\sum_{n=1}^{\infty} \sigma^{\ast}_{b(a)}(n) q^n
= \begin{cases}
\sum_{n=0}^{\infty} \fr{q^{an+b}}{(1-q^{an+b})^2} & \text{if\ } b\not=0 \\
\sum_{n=1}^{\infty} \fr{q^{an}}{(1-q^{an})^2} & \text{if\ } b=0.
\end{cases}
\end{equation}
Therefore, with the exception of part~(f), each one of the series in Theorems~\ref{Gosper-sums}~and~\ref{Beyond-Gosper} can be written in terms
of $\sigma^{\ast}_{b(a)}(n)$.
\end{remark}
\begin{remark}
Note that Theorem~\ref{Gosper-sums}(e) is essentially equivalent to the Ramanujan's identity
\[
q\psi^2(q)\psi^2(q^3) = \sum_{n=1}^{\infty}\fr{nq^n}{1-q^{2n}} - 3\sum_{n=1}^{\infty}\fr{n q^{3n}}{1-q^{6n}},
\]
see Berndt~\cite[p. 223]{Berndt-1}. We include it here as our proof is different.
\end{remark}
Our primary application is motivated by Ramanujan's integral identities of the theta functions which are found in~\cite[p. 207]{Ramanujan}. 
Ramanujan recorded six
integrals involving the functions $f(q)$, $\varphi(q)$, and $\psi(q)$. Son~\cite{Son} gave complete proofs for five of these integrals and a partial proof for the sixth
integral stating that
\begin{equation}\label{Ram-integral-1}
q^{\fr{1}{9}} \prod_{n=0}^{\infty} \fr{(1-q^{3n+1})^{3n+1}}{(1-q^{3n+2})^{3n+2}}
= \exp\left(-C - \fr{1}{9}\int_q^1 \fr{f^9(-t)}{f^3(-t^3)}\fr{dt}{t} \right).
\end{equation}
Berndt~and~Zaharescu~\cite{Berndt-Zaharescu} found a complete proof for (\ref{Ram-integral-1}) and established Ramanujan's constant $C$. Extensions for the
integral identity~(\ref{Ram-integral-1}) along with a method to compute related constants $C$ are obtained in Ahlgren~\emph{et al.}~\cite{Ahlgren-et-al}.
Besides, among the six integral formulas of Ramanujan we find
\begin{equation}\label{Ram-integral-2}
\exp\left( -2 \int_0^q \psi^4(t^2)\, dt\right) = \fr{\psi(-q)}{\psi(q)} \ \text{and\ }
\exp\left( 2 \int_0^q \psi^2(t)\psi^2(t^3)\, dt\right) = \fr{\varphi(-q)}{\varphi(q)}.
\end{equation}
Observe that if we consider indefinite integrals rather than definite integrals in the two identities
of~(\ref{Ram-integral-2}), then we get Lambert series involved as values.
Indeed, as to the first formula in~(\ref{Ram-integral-2}),
on taking logarithms and expanding as powers series, we get
\begin{equation}\label{equiv-int-1}
\begin{split}
\int \psi^4(q^2)\, dq &= \fr{-1}{2} \log\fr{\psi(-q)}{\psi(q)} \\
&= -\fr{1}{2}\sum_{n=0}^{\infty} \big(\log(1-q^{2n+1}) - \log(1+q^{2n+1})\big) \\
&= \sum_{n=1}^{\infty}\sum_{m=1}^{\infty} \fr{q^{(2n-1)(2m-1)}}{2m-1} \\
&= \sum_{n=1}^{\infty} \fr{1}{2n-1}\fr{q^{2n-1}}{1-q^{4n-2}}.
\end{split}
\end{equation}
Likewise the second formula in~(\ref{Ram-integral-2}) is equivalent to
\begin{equation}\label{equiv-int-2}
\begin{split}
\int \psi^2(q)\psi^2(q^3)\, dq
&= \sum_{n=1}^{\infty} \fr{1}{2n-1}\left(\fr{q^{2n-1}}{1-q^{2n-1}} - \fr{q^{6n-3}}{1-q^{6n-3}} \right) \\
&= \sum_{n=1}^{\infty}\fr{q^{2n-1}+q^{4n-2}}{(2n-1)(1-q^{6n-3})}.
\end{split}
\end{equation}
%
It is also worth to notice that  $\int \psi^8(q) \, dq$ can be obtained by using Ramanujan's famous
formula for $\psi^8(q)$ (see Berndt~\cite[p. 138]{Berndt}) as follows
\[
\begin{split}
\int \psi^8(q) \, dq
&= \int \sum_{n=1}^{\infty}\fr{n^3 q^{n-1}}{1-q^{2n}} \,dq \\
&= C+ \sum_{n=1}^{\infty}\fr{n^2}{2}\Big(\log(1+q^n)-\log(1-q^n)\Big) \\
&= C+ \sum_{n=1}^{\infty}\sum_{m=1}^{\infty}\fr{n^2 q^{n(2m-1)}}{2m-1}.
\end{split}
\]

\noindent
We have in this direction the following results including a different formula for $\int \psi^8(q) \, dq$.
\begin{theorem}\label{main-integrals}
We have
\[
\begin{split}
{\mathrm(a)\qquad}
\int \psi^8(q) \, dq
&= C + \sum_{n=1}^{\infty}\fr{1}{2n-1}\fr{q^{2n-1}+q^{4n-2}}{(1-q^{2n-1})^3} \\
{\mathrm(b)\quad}
\int \fr{\psi^8(q)}{q\psi^4(q^2)} \, dq
&= C + \log q + 8\sum_{n=1}^{\infty}\fr{1}{n}\fr{q^n + q^{2n} + q^{3n}}{1-q^{4n}} \\
&= C + \log q + \sum_{n=1}^{\infty}\left(\fr{24 q^n}{n(1-q^{2n})} - \fr{16 q^{2n-1}}{(2n-1)(1-q^{4n-2})}\right) \\
{\mathrm(c)\quad}
\int \fr{\psi^8(q^2)}{q\psi^4(q^4)} \, dq
&= C + \log q + \sum_{n=1}^{\infty}\fr{4(q^{2n} + q^{4n} + q^{6n})}{n(1-q^{8n})} \\
{\mathrm(d)\quad}
\int \fr{q \psi^6(q^3)}{\psi^2(q)} \, dq
&= C + \sum_{n=1}^{\infty}\left(\fr{2 + 3q^{2n} + 3q^{4n}}{6n(1-q^{6n})}
- \fr{2}{(6n-3)(1-q^{6n-3})}\right) \\
{\mathrm(e)\quad}
\int \fr{\psi^6(q^3)}{q \psi^2(q^9)} \, dq
&= C + \log q+ \sum_{n=1}^{\infty}\left(\fr{-2 + 3q^{6n} + q^{12n}}{2n(1-q^{18n})}
+ \fr{2}{(2n-1)(1-q^{6n-3})} \right) \\
{\mathrm(f)\quad}
\int \fr{\psi^6(q)}{q \psi^2(q^3)} \, dq
&= C + \log q- \sum_{n=1}^{\infty}\left(\fr{6 + 9q^{2n} + 9q^{4n}}{2n(1-q^{6n})}
+ \fr{3-12q^{2n-1}-12q^{4n-2}}{(2n-1)(1-q^{6n-3})} \right). \\
\end{split}
\]
\end{theorem}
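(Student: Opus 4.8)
The plan is to establish all six parts by one mechanism: rewrite each integrand as $q^{-1}$ times a monomial in the $q$-constants $\Pi_{q^k}$, replace that monomial by a constant plus a Lambert series using the identities already proved, and then integrate term by term.

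First I would carry out the (purely formal) reductions. Since $\Pi_{q^k}=q^{k/4}\psi^2(q^k)$, one has $\psi^{2m}(q^k)=q^{-km/4}\Pi_{q^k}^m$, and a check of exponents shows that the factor $q^{-1}$ in each statement is exactly what is needed to clear all fractional powers of $q$; for instance $\psi^8(q)/\bigl(q\psi^4(q^2)\bigr)=q^{-1}\Pi_q^4/\Pi_{q^2}^2$, $q\psi^6(q^3)/\psi^2(q)=q^{-1}\Pi_{q^3}^3/\Pi_q$, $\psi^6(q)/\bigl(q\psi^2(q^3)\bigr)=q^{-1}\Pi_q^3/\Pi_{q^3}$, and analogously $\psi^8(q^2)/\bigl(q\psi^4(q^4)\bigr)=q^{-1}\Pi_{q^2}^4/\Pi_{q^4}^2$ and $\psi^6(q^3)/\bigl(q\psi^2(q^9)\bigr)=q^{-1}\Pi_{q^3}^3/\Pi_{q^9}$. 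Part~(a) is exceptional: there I would not reduce to the $\Pi$'s at all, but start directly from Ramanujan's formula $\psi^8(q)=\sum_{n\ge1}n^3q^{n-1}/(1-q^{2n})$.

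Next I would turn each $\Pi$-monomial into a constant plus a Lambert series. For~(b) this is Theorem~\ref{Gosper-sums}(c) verbatim, and for~(c) it is the same identity with $q$ replaced by $q^2$; for~(d) it is precisely Theorem~\ref{Beyond-Gosper}; for~(e) one combines Theorem~\ref{Gosper-sums}(d) with Theorem~\ref{Beyond-Gosper} to solve for $\Pi_{q^3}^3/\Pi_{q^9}$; and for~(f) one uses Theorem~\ref{Gosper-sums}(b) together with Theorem~\ref{Gosper-sums}(e) and Theorem~\ref{Beyond-Gosper}. Indeed, expanding $(\Pi_q^2+3\Pi_{q^3}^2)^2=\Pi_q^4+6\Pi_q^2\Pi_{q^3}^2+9\Pi_{q^3}^4$ in Theorem~\ref{Gosper-sums}(b) presents a linear combination of $\Pi_q^3/\Pi_{q^3}$, $\Pi_q\Pi_{q^3}$ and $\Pi_{q^3}^3/\Pi_q$ as a Lambert series; substituting the values of the last two from Theorem~\ref{Gosper-sums}(e) and Theorem~\ref{Beyond-Gosper} and solving yields a Lambert-series formula for $\Pi_q^3/\Pi_{q^3}$. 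For~(a) the right side of Ramanujan's formula rearranges, on collecting the exponent $m=n(2j+1)$, into $q^{-1}\sum_{m\ge1}q^m\sum_{d\mid m,\ m/d\ \mathrm{odd}}d^3$, which is exactly the double series produced by the claimed answer via the elementary expansion $u(1+u)(1-u)^{-3}=\sum_{k\ge1}k^2u^k$.

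Finally I would integrate term by term, which is legitimate because each Lambert series converges uniformly on compact subsets of $|q|<1$. The only antiderivatives needed are $\int q^{-1}\,dq=\log q$, which produces the $\log q$ summands, and
\[
\int\frac{q^{an-1}}{(1-q^{an})^2}\,dq=\frac{1}{an}\cdot\frac{q^{an}}{1-q^{an}}+\mathrm{const}.
\]
Elementary rewritings such as $\frac{q^n}{1-q^n}=\frac{q^n+q^{2n}+\dots+q^{kn}}{1-q^{kn}}$ and $\frac{q^n}{1-q^n}=\frac{1}{1-q^n}-1$ then bring each sum to the stated shape, and any residual purely numerical series (of the type $\sum_{n\ge1}\frac{1}{n(2n-1)}$) is absorbed into the constant $C$. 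I expect the main difficulty to lie in part~(f): one must check that the cross terms really cancel in passing from Theorem~\ref{Gosper-sums}(b) to the Lambert series for $\Pi_q^3/\Pi_{q^3}$, and then — more delicately — keep track of how the separately divergent pieces $\sum 1/n$ and $\sum 1/(2n-1)$ recombine into a convergent series in the final simplification; parts~(d) and~(e) call for a milder form of the same bookkeeping, while (a), (b) and (c) are routine once the reductions are in hand.
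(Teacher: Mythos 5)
Your proposal is correct and its engine is the same as the paper's: write each integrand as $q^{-1}$ times a monomial in the $\Pi_{q^k}$, convert that monomial to a Lambert series via Theorems~\ref{Gosper-sums} and~\ref{Beyond-Gosper}, and integrate term by term with $\int q^{an-1}(1-q^{an})^{-2}\,dq=\tfrac{1}{an}(1-q^{an})^{-1}$, taking care to recombine the individually divergent $\sum 1/(an)$ tails (a point you rightly flag for (d)--(f)). You deviate from the paper in two places, both legitimately and arguably more cleanly. For (a) the paper applies Theorem~\ref{Gosper-sums}(f) to get $\psi^8(q)=6\sum q^{4n-3}(1-q^{2n-1})^{-4}+\sum q^{2n-2}(1-q^{2n-1})^{-2}$ and integrates that, whereas you start from Ramanujan's $\psi^8(q)=\sum n^3q^{n-1}/(1-q^{2n})$ and observe that the resulting double series $\sum_{n,m}\frac{n^2q^{n(2m-1)}}{2m-1}$ (already displayed in the paper's introduction) resums, via $u(1+u)(1-u)^{-3}=\sum_k k^2u^k$, to exactly the stated answer; this shortcut is valid and shows the two formulas for $\int\psi^8$ are the same series read in two orders. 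For (c) you substitute $q\mapsto q^2$ in Theorem~\ref{Gosper-sums}(c) and integrate directly, while the paper subtracts the two instances of that identity to isolate the odd-index terms and then expresses the answer through part (b); your route avoids that detour. The one item you pass over is the \emph{second} displayed equality in (b): it is not an automatic consequence of the first but is obtained in the paper from Theorem~\ref{Gosper-sums}(a) together with the expansion~(\ref{equiv-int-1}) of $\int\psi^4(q^2)\,dq$ (or, alternatively, by checking that the two convergent series are antiderivatives of the same function agreeing at $q=0$); you should supply one of these arguments to complete (b).
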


\noindent
Our secondary application is related to the following Lambert series representations for $q \psi^4(q^2)$
and $\fr{\psi^3(q)}{\psi(q^3)}$ which are due to Ramanujan's (see Berndt~\cite[p. 223, 226]{Berndt-1}),
\begin{equation}\label{Ram-1}
\begin{split}
q \psi^4(q^2) &= \sum_{n=1}^{\infty} \fr{(2n-1)q^{2n-1}}{1-q^{4n-2}}  \\
\fr{\psi^3(q)}{\psi(q^3)} &= 1 + 3\sum_{n=1}^{\infty}\Big( \fr{q^{6n-5}}{1-q^{6n-5}}-\fr{q^{6n-1}}{1-q^{6n-1}} \Big).
\end{split}
\end{equation}
For other related identities involving Lambert series using we refer
the reader to Shen~\cite{Shen-1} and Liu~\cite{Liu-2003}.
It seems that not much is known about series representations of higher powers of
$q \psi^4(q^2)$ and $\fr{\psi^3(q)}{\psi(q^3)}$. We have the following contribution
concerning the squares of $q \psi^4(q^2)$ and $\fr{\psi^3(q)}{\psi(q^3)}$.
%
\begin{theorem}\label{main-thm-1}
We have
\[ {\mathrm (a)\quad}
\Big(\fr{\psi^3(q^3)}{\psi(q^9)}\Big)^2
= \left(1 + 3\sum_{n=1}^{\infty}\Big(\fr{q^{18n-15}}{1-q^{18n-15}}-\fr{q^{18n-3}}{1-q^{18n-3}} \Big) \right)^2
\]
\[
=
1 + 3\sum_{n=1}^{\infty}\Big(\fr{q^{6n}}{(1-q^{6n})^2} + \fr{2 q^{6n-3}}{(1-q^{6n-3})^2} -
\fr{9 q^{18n}}{(1-q^{18n})^2} \Big).
\]
\[ {\mathrm (b)\quad}
\big( q \psi^4(q^2) \big)^2
= \Big(\sum_{n=1}^{\infty} \fr{(2n-1)q^{2n-1}}{1-q^{4n-2}} \Big)^2
=\sum_{n=1}^{\infty}\Big(\fr{6 q^{8n-4}}{(1-q^{4n-2})^4}+\fr{q^{4n-2}}{(1-q^{4n-2})^2} \Big).
\]
\end{theorem}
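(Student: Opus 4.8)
The plan is to observe that in each part the first displayed equality is nothing but Ramanujan's formula~(\ref{Ram-1}) in disguise --- for part~(a) one replaces $q$ by $q^{3}$ in the second identity of~(\ref{Ram-1}), and for part~(b) one squares the first identity of~(\ref{Ram-1}) --- so that all the work lies in the second equalities, and then to deduce those from Theorems~\ref{Gosper-sums} and~\ref{Beyond-Gosper} via the dictionary supplied by~(\ref{Piq}).

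For part~(b), note first that~(\ref{Piq}) gives $\Pi_{q^{2}} = q^{1/2}\psi^{2}(q^{2})$, hence $q\psi^{4}(q^{2}) = \Pi_{q^{2}}^{2}$ and $\big(q\psi^{4}(q^{2})\big)^{2} = \Pi_{q^{2}}^{4}$. I would then apply Theorem~\ref{Gosper-sums}(f) with $q$ replaced throughout by $q^{2}$; its right-hand side turns into $6\sum_{n\ge1}\frac{q^{8n-4}}{(1-q^{4n-2})^{4}} + \sum_{n\ge1}\frac{q^{4n-2}}{(1-q^{4n-2})^{2}}$, which is precisely the claimed expression. One could instead bypass Theorem~\ref{Gosper-sums}(f) and argue directly from Ramanujan's $\psi^{8}$ identity $\psi^{8}(q)=\sum_{n\ge1} n^{3}q^{n-1}/(1-q^{2n})$: substituting $q\mapsto q^{2}$ and multiplying by $q^{2}$ gives $\big(q\psi^{4}(q^{2})\big)^{2} = \sum_{n\ge1} n^{3}q^{2n}/(1-q^{4n})$, and then expanding the geometric series, regrouping the (absolutely convergent) double sum according to the odd multiplier $2m-1$, summing the inner cubic series by $\sum_{n\ge1} n^{3}x^{n} = x(1+4x+x^{2})/(1-x)^{4}$, and finally splitting $\frac{x+4x^{2}+x^{3}}{(1-x)^{4}} = \frac{6x^{2}}{(1-x)^{4}} + \frac{x}{(1-x)^{2}}$ with $x=q^{4m-2}$ yields the same answer.

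For part~(a), using~(\ref{Piq}) once more I have $\psi^{2}(q^{3}) = q^{-3/4}\Pi_{q^{3}}$ and $\psi^{2}(q^{9}) = q^{-9/4}\Pi_{q^{9}}$, so that
\[
\Big(\frac{\psi^{3}(q^{3})}{\psi(q^{9})}\Big)^{2} = \frac{\psi^{6}(q^{3})}{\psi^{2}(q^{9})} = \frac{\Pi_{q^{3}}^{3}}{\Pi_{q^{9}}}.
\]
This is exactly the ratio occurring on the right-hand side of Theorem~\ref{Gosper-sums}(d); solving that identity for $\Pi_{q^{3}}^{3}/\Pi_{q^{9}}$ gives
\[
\frac{\Pi_{q^{3}}^{3}}{\Pi_{q^{9}}} = 1 + 3\Big(\sum_{n\ge1}\frac{q^{2n}}{(1-q^{2n})^{2}} - 9\sum_{n\ge1}\frac{q^{18n}}{(1-q^{18n})^{2}} - \frac{\Pi_{q^{3}}^{3}}{\Pi_{q}}\Big),
\]
and substituting the value of $\Pi_{q^{3}}^{3}/\Pi_{q}$ provided by Theorem~\ref{Beyond-Gosper} cancels the $\sum_{n\ge1} q^{2n}/(1-q^{2n})^{2}$ contributions and leaves precisely $1 + 3\sum_{n\ge1}\big(\frac{q^{6n}}{(1-q^{6n})^{2}} + \frac{2q^{6n-3}}{(1-q^{6n-3})^{2}} - \frac{9q^{18n}}{(1-q^{18n})^{2}}\big)$, as required.

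I do not anticipate a genuine obstacle here: once Theorems~\ref{Gosper-sums} and~\ref{Beyond-Gosper} are available, part~(a) is a two-line manipulation and part~(b) a single substitution $q\mapsto q^{2}$. The only points demanding any care are the exponent bookkeeping in the substitutions $q\mapsto q^{2}$ and $q\mapsto q^{3}$, and --- should one prefer the alternative derivation of part~(b) --- keeping straight which summation index plays the role of the odd multiplier when the double sum is regrouped, together with the elementary partial-fraction split; all of this is routine for $|q|<1$.
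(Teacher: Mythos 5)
Your proposal is correct and follows essentially the same route as the paper: part~(a) is obtained by combining Theorem~\ref{Gosper-sums}(d) with Theorem~\ref{Beyond-Gosper} to evaluate $\Pi_{q^3}^3/\Pi_{q^9}$, and part~(b) is Theorem~\ref{Gosper-sums}(f) with $q\mapsto q^2$ together with $q\psi^4(q^2)=\Pi_{q^2}^2$. Your alternative derivation of part~(b) from Ramanujan's $\psi^8$ formula via regrouping the double series and the partial-fraction split is a valid bonus, but the main argument coincides with the paper's.
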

The paper is organised as follows. As
the main tool in this note is the $q$-trigonometry of Gosper~\cite{Gosper},
the needed ingredients about this will be presented in Section~\ref{Facts-q-trig}.
In Section~\ref{sec:Gosper-sums} we will prove Theorem~\ref{Gosper-sums},
in Section~\ref{sec:Beyond-Gosper} we shall prove Theorem~\ref{Beyond-Gosper},
Section~\ref{sec:main-thm-1} is devoted to the proof of Theorem~\ref{main-thm-1},
and in Section~\ref{sec:main-integrals} we will prove Theorem~\ref{main-integrals}.
\section{Facts on Gosper's $q$-trigonometry}\label{Facts-q-trig}
In this section we collect the properties of Gosper's functions $\sin_q z$ and $\cos_q z$,
as defined in (\ref{sine-cos-q-prod}), which we need
to derive our results.
Just as the classical functions $\sin z$ and $\cos z$, it is easy to see that
 $\cos_q (z) = \sin_q \Big(\fr{\pi}{2} - z\Big)$,
the function $\sin_q z$ is odd and the function $\cos_q z$ is even and therefore
we have for any nonnegative integer $m$
\begin{equation}\label{zero-derivative}
\sin_q ^{(2m)} 0 = \cos_q ^{(2m+1)} 0 = \sin_q^{(2m+1)} \fr{\pi}{2} = \cos_q ^{(2m)} \fr{\pi}{2} = 0
\end{equation}
where derivatives are with respect to $z$.
Gosper
gave several $q$-trigonometric identities based on a computer algebra facility called MACSYMA.
For our current purposes, we shall need the following formulas which are marked with the same labels as in
Gosper~\cite{Gosper}.
\[   \label{q-Double} \tag{$q$-Double}
\sin_q(2z) = \frac{1}{2}\frac{\Pi_q}{\Pi_{q^4}} \sqrt{(\sin_{q^4}z)^2- (\sin_{q^2}z)^4 },
\]
\[
\label{q-Double-2} \tag{$q$-Double$_2$}
\sin_q(2z) = \fr{\Pi_q}{\Pi_{q^2}} \sin_{q^2} z \cos_{q^2} z,
\]
\[
\label{q-Double-3} \tag{$q$-Double$_3$}
\cos_q(2z) = (\cos_{q^2} z)^2 - (\sin_{q^2} z)^2,
\]
\[   \label{q-Double-4} \tag{$q$-Double$_4$}
\cos_q(2z) = \frac{1}{2}\frac{\Pi_q}{\Pi_{q^4}} \sqrt{(\cos_{q^4}z)^2- (\cos_{q^2}z)^4 },
\]
\[   \label{q-Double-5} \tag{$q$-Double$_5$}
\cos_q(2z) = (\cos_{q}z)^4- (\sin_{q}z)^4,
\]
\[ \label{q-Triple} \tag{$q$-Triple}
\sin_q(3z) = \frac{1}{3}\frac{\Pi_q}{\Pi_{q^9}} \sin_{q^9}z - \left(1+\frac{1}{3}\frac{\Pi_q}{\Pi_{q^9}}\right) (\sin_{q^3}z)^3,
\]
and
\[ \label{q-Triple-2} \tag{$q$-Triple$_2$}
\sin_q(3z) = \frac{\Pi_q}{\Pi_{q^3}} (\cos_{q^3} z)^2 \sin_{q^3}z -  (\sin_{q^3}z)^3.
\]
Gosper by combining (\ref{q-Double}) and (\ref{q-Double-2}) obtained
\begin{equation}\label{Pi-124}
\fr{\Pi_q^2}{\Pi_{q^2}\Pi_{q^4}}-\fr{\Pi_{q^2}^2}{\Pi_{q^4}^2} = 4.
\end{equation}
In addition to the previous formulas which are in one variable $z$,
Gosper~\cite[p. 101]{Gosper} also stated many multi-variable identities. For instance he conjectured that
\[ \label{q-Add-3} \tag{$q$-Add$_3$}
\sin_{q^3} x \sin_q (2y-x) - \sin_{q^3} y \sin_q (2x-y) = \cos_{q^3} y \cos_q (2x-y) -
\cos_{q^3} y \cos_q (2x-y).
\]
A proof for (\ref{q-Double-2}) can be found in~\cite{Bachraoui-1, Mezo}, a proof for
(\ref{q-Double-3}) is given in~\cite{Bachraoui-1}, proofs for (\ref{q-Triple}), (\ref{q-Triple-2}), and (\ref{q-Add-3}) are obtained in~\cite{Bachraoui-2}, and a proof for (\ref{q-Double}) is found
in~\cite{Bachraoui-3} . In addition,
it was observed in Gosper~\cite[p. 89-93]{Gosper} that (\ref{q-Double-4}) is equivalent to (\ref{q-Double})
and that (\ref{q-Double-5}) can be deduced by a combination of (\ref{q-Double}), (\ref{q-Double-3}), and
(\ref{q-Double-4}).
Furthermore, we shall need some explicit derivates with respect to the variable $z$ of the functions $\sin_q\pi z$ and $\cos_q\pi z$. To evaluate these derivatives,
we make an appeal to series expansions for $\sin_q\pi z$ and $\cos_q\pi z$ in powers of $(q^{-z}-q^z)$ which were stated without proof by Gosper~\cite[p. 98-99]{Gosper}.
We first confirm these series expansions as in the following theorem.
\begin{theorem}\label{sine-cosine-expand}
If $z$ is a complex number such that $|z|<1$, then
\begin{equation*}
\begin{split}
(a)\quad \sin_q \pi z
&= \Pi_q q^{z^2} \Big( (q^{-z}- q^{z}) -
(q^{-z}- q^{z})^3 \sum_{n=1}^{\infty} \frac{q^{2n}}{(1-q^{2n})^2} \\
& \quad + (q^{-z}- q^{z})^5
\frac{\left(\sum_{n=1}^{\infty} \frac{q^{2n}}{(1-q^{2n})^2}\right)^2
- \sum_{n=1}^{\infty} \frac{q^{4n}}{(1-q^{2n})^4}}{2} - \cdots \Big). \\
(b)\quad \cos_q \pi z
&= q^{z^2} \Big( 1 -
(q^{-z}- q^{z})^2 \sum_{n=1}^{\infty} \frac{q^{2n-1}}{(1-q^{2n-1})^2} \\
& \quad + (q^{-z}- q^{z})^4
\frac{\left(\sum_{n=1}^{\infty} \frac{q^{2n-1}}{(1-q^{2n-1})^2}\right)^2
- \sum_{n=1}^{\infty} \frac{q^{4n-2}}{(1-q^{2n-1})^4}}{2} - \cdots \Big).
\end{split}
\end{equation*}
\end{theorem}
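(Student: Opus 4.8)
The plan is to work directly from the infinite-product representations in~(\ref{sine-cos-q-prod}) and to reduce both expansions to a single fact: the product over $n$ can be rewritten as a product of linear factors in the variable $w := q^{-z}-q^{z}$, which is then expanded via elementary symmetric functions.

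First I would clean up the sine. Writing $q^{(z-\frac12)^2}=q^{\frac14}q^{z^2}q^{-z}$ and peeling the $n=0$ factor off $(q^{2z};q^2)_{\infty}$ via $q^{-z}(1-q^{2z})=q^{-z}-q^{z}$, one gets
\[
\sin_q(\pi z)=q^{\frac14}q^{z^2}(q^{-z}-q^{z})\,\frac{(q^{2+2z};q^2)_{\infty}(q^{2-2z};q^2)_{\infty}}{(q;q^2)_{\infty}^2}.
\]
Since by~(\ref{Piq}) we have $q^{\frac14}/(q;q^2)_{\infty}^2=\Pi_q/(q^2;q^2)_{\infty}^2$, this becomes
\[
\sin_q(\pi z)=\Pi_q\,q^{z^2}(q^{-z}-q^{z})\prod_{n=1}^{\infty}\frac{(1-q^{2n+2z})(1-q^{2n-2z})}{(1-q^{2n})^2},
\]
and straight from~(\ref{sine-cos-q-prod}) one has likewise
\[
\cos_q(\pi z)=q^{z^2}\prod_{n=1}^{\infty}\frac{(1-q^{2n-1+2z})(1-q^{2n-1-2z})}{(1-q^{2n-1})^2}.
\]

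The key step is the identity $q^{2z}+q^{-2z}-2=(q^{-z}-q^{z})^2=w^2$, which collapses each factor: for any exponent $m$,
\[
\frac{(1-q^{m+2z})(1-q^{m-2z})}{(1-q^{m})^2}=\frac{1-q^{m}(q^{2z}+q^{-2z})+q^{2m}}{(1-q^{m})^2}=1-\frac{q^{m}}{(1-q^{m})^2}\,w^2.
\]
Applying this with $m=2n$ and $m=2n-1$ gives $\sin_q(\pi z)=\Pi_q q^{z^2}w\prod_{n\ge1}(1-a_nw^2)$ with $a_n=q^{2n}/(1-q^{2n})^2$, and $\cos_q(\pi z)=q^{z^2}\prod_{n\ge1}(1-b_nw^2)$ with $b_n=q^{2n-1}/(1-q^{2n-1})^2$. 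Because $|q|<1$ makes $\sum_n a_n$ and $\sum_n b_n$ converge, these products converge absolutely and may be expanded as $\prod_{n\ge1}(1-a_nw^2)=\sum_{k\ge0}(-1)^k e_k(a_1,a_2,\dots)\,w^{2k}$, where $e_k$ is the $k$-th elementary symmetric function. Using $e_1=\sum_n a_n$ and Newton's identity $e_2=\tfrac12\bigl((\sum_n a_n)^2-\sum_n a_n^2\bigr)$, and then substituting $a_n=q^{2n}/(1-q^{2n})^2$ (respectively $b_n$), one reads off precisely the coefficients displayed in part~(a) (respectively part~(b)), the leftover factor $w=q^{-z}-q^{z}$ in the sine case producing the odd powers there.

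The computation is routine; the only places that call for a little care are the prefactor bookkeeping for the sine (checking that the powers of $q$ and the shifted factorials recombine exactly into $\Pi_q q^{z^2}(q^{-z}-q^{z})$) and the analytic justification that the absolutely convergent product may be rearranged as a convergent power series in $w$ on the disc in question, together with the remark that $w$ is an analytic function of $z$ vanishing at $z=0$, so the right-hand sides are genuine power series in $(q^{-z}-q^{z})$. I do not foresee any substantial obstacle.
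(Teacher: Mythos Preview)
Your proposal is correct and follows essentially the same route as the paper: rewrite the product definition so that each factor becomes $1-\dfrac{q^{m}}{(1-q^{m})^2}\,w^2$ with $w=q^{-z}-q^{z}$, and then expand the resulting product in powers of $w^2$. The only cosmetic difference is that you phrase the expansion in terms of elementary symmetric functions and Newton's identity, whereas the paper writes out $a_0,a_2,a_4$ by hand and computes $\sum_{k<l}\dfrac{q^{2k}q^{2l}}{(1-q^{2k})^2(1-q^{2l})^2}$ directly; the content is the same.
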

\begin{proof}
Clearly,
\[
\begin{split}
\sin_q \pi z
&= q^{\left(z-\fr{1}{2}\right)^2} \prod_{n=1}^{\infty} \fr{(1-q^{2n-2z})(1-q^{2n+2z-2})}{(1-q^{2n-1})^2} \\
&= \fr{q^{\fr{1}{4}}}{(q;q^2)_{\infty}^2} q^{z^2-z}(1-q^{2z}) \prod_{n=1}^{\infty} (1-q^{2n-2z})(1-q^{2n+2z}) \\
&= \Pi_q q^{z^2}  (q^{-z}-q^z) \prod_{n=1}^{\infty} \left(1- (q^{-z}-q^z)^2 \fr{q^{2n}}{(1-q^{2n})^2} \right).
\end{split}
\]
Writing
\[
\prod_{n=1}^{\infty} \left(1- (q^{-z}-q^z)^2 \fr{q^{2n}}{(1-q^{2n})^2} \right) =
\sum_{n=0}^{\infty} a_n (q^{-z}-q^z)^n,
\]
we easily see that
\[
a_{2n+1}= 0,\quad a_0 = 1, \quad
a_2 = \sum_{n=1}^{\infty}\fr{q^{2n}}{(1-q^{2n})^2},\ \text{and\ }
a_4 = \sum_{k<l} \fr{q^{2k} q^{2l}}{(1-q^{2k})^2 (1-q^{2l})^2}.
\]
But
\[
\begin{split}
\sum_{k<l} \fr{q^{2k} q^{2l}}{(1-q^{2k})^2 (1-q^{2l})^2}
&=
\fr{1}{2} \left( \fr{q^2}{(1-q^{2})^2} + \fr{q^4}{(1-q^{4})^2} + \fr{q^6}{(1-q^{6})^2}+ \cdots   \right)^2 \\
&\quad
- \fr{1}{2} \left( \fr{q^4}{(1-q^{2})^4} + \fr{q^8}{(1-q^{4})^4} + \fr{q^{12}}{(1-q^{6})^4} + \cdots   \right) \\
&=
\fr{1}{2}\left( \sum_{n=1}^{\infty} \fr{q^{2n}}{(1-q^{2n})^2} \right)^2
- \fr{1}{2} \sum_{n=1}^{\infty} \fr{q^{4n}}{(1-q^{2n})^4}.
\end{split}
\]
This proves part (a). Part (b) follows similarly.
\end{proof}
From these expansions one can straightforwardly deduce the following explicit derivatives with
respect to $z$.
\begin{corollary}\label{derivatives-cor}
We have
\begin{align*}
\sin_q' 0
&= -\frac{2\ln q}{\pi}\Pi_q  \nonumber \\
\sin_q^{(3)} 0
&= -\frac{2(\ln q)^2}{\pi^3} \Pi_q
\left(6 +\ln q - 24\ln q \sum_{n=1}^{\infty}\frac{q^{2n}}{(1-q^{2n})^2} \right)  \nonumber \\
\cos_q'' 0
&= \frac{2\ln q}{\pi^2}
\left(1 - 4\ln q \sum_{n=1}^{\infty}\frac{q^{2n-1}}{(1-q^{2n-1})^2} \right)  \nonumber\\
\cos_q^{(4)} 0
&= \frac{4 (\ln q)^2}{\pi^4} \left( 3 - 8 (\ln q)^2 \sum_{n=1}^{\infty}\frac{q^{2n-1}}{(1-q^{2n-1})^2}
- 24 \ln q \sum_{n=1}^{\infty}\frac{q^{2n-1}}{(1-q^{2n-1})^2} \right. \nonumber \\
& \left. \quad + 48 (\ln q)^2 \Bigg( \Big( \sum_{n=1}^{\infty}\frac{q^{2n-1}}{(1-q^{2n-1})^2} \Big)^2
- \sum_{n=1}^{\infty}\frac{q^{4n-2}}{(1-q^{2n-1})^4} \Big) \vphantom{\int_1^2} \right) \nonumber \\
\end{align*}
\end{corollary}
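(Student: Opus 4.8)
The plan is to feed the series expansions of Theorem~\ref{sine-cosine-expand} into the Taylor expansion at $z=0$ and read off the relevant coefficients. Abbreviate $S_1=\sum_{n\ge1}\frac{q^{2n}}{(1-q^{2n})^2}$, $S_2=\sum_{n\ge1}\frac{q^{4n}}{(1-q^{2n})^4}$, $T_1=\sum_{n\ge1}\frac{q^{2n-1}}{(1-q^{2n-1})^2}$, $T_2=\sum_{n\ge1}\frac{q^{4n-2}}{(1-q^{2n-1})^4}$, and set $v=v(z):=q^{-z}-q^z$, so that Theorem~\ref{sine-cosine-expand} reads $\sin_q\pi z=\Pi_q q^{z^2}\bigl(v-S_1v^3+\tfrac12(S_1^2-S_2)v^5-\cdots\bigr)$ and $\cos_q\pi z=q^{z^2}\bigl(1-T_1v^2+\tfrac12(T_1^2-T_2)v^4-\cdots\bigr)$. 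Since the derivatives $f^{(n)}$ in the corollary are taken with respect to the argument, a relation $f(\pi z)=\sum_k c_kz^k$ yields $f^{(n)}(0)=n!\,\pi^{-n}c_n$; hence it suffices to extract the low-order Taylor coefficients in $z$ of the two products above.

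First I would record the elementary expansions $q^{z^2}=e^{z^2\ln q}=1+(\ln q)z^2+\tfrac12(\ln q)^2z^4+\cdots$ and $v(z)=-2\sinh(z\ln q)=-2(\ln q)z-\tfrac13(\ln q)^3z^3-\cdots$, whence $v^2=4(\ln q)^2z^2+\tfrac43(\ln q)^4z^4+\cdots$, $v^3=-8(\ln q)^3z^3+\cdots$, and $v^4=16(\ln q)^4z^4+\cdots$. Because $v=O(z)$, only the terms up to $v^3$ (respectively up to $v^4$) in Theorem~\ref{sine-cosine-expand} can contribute to the coefficient of $z^3$ in $\sin_q\pi z$ (respectively of $z^4$ in $\cos_q\pi z$), so the displayed truncations already carry all the needed information; in particular no convergence issue beyond the validity of Theorem~\ref{sine-cosine-expand} near $z=0$ arises.

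Then I would multiply out and collect. For the sine, the coefficient of $z$ in $\Pi_q q^{z^2}(v-S_1v^3+\cdots)$ is $-2(\ln q)\Pi_q$, so $\sin_q'0=\pi^{-1}(-2\ln q)\Pi_q$; the coefficient of $z^3$ is $\Pi_q\bigl(-2(\ln q)^2-\tfrac13(\ln q)^3+8S_1(\ln q)^3\bigr)$, and multiplying by $3!/\pi^3$ and pulling out $-2(\ln q)^2$ gives the stated formula for $\sin_q^{(3)}0$. For the cosine, the coefficient of $z^0$ is $1$; the coefficient of $z^2$ is $(\ln q)-4T_1(\ln q)^2$, and $2!/\pi^2$ times this is $\cos_q''0$; the coefficient of $z^4$ is $\tfrac12(\ln q)^2-4T_1(\ln q)^3-\tfrac43T_1(\ln q)^4+8(T_1^2-T_2)(\ln q)^4$, and $4!/\pi^4$ times this, after grouping powers of $\ln q$ and extracting the factor $4(\ln q)^2$, is precisely the claimed expression for $\cos_q^{(4)}0$.

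The computation involves no genuine obstacle; the only care needed is to track the cross terms coming from $q^{z^2}$ and from the powers of $v$, which is pure bookkeeping. One may equivalently differentiate the expansions of Theorem~\ref{sine-cosine-expand} directly $n$ times and evaluate at $z=0$, using $v(0)=0$, $v'(0)=-2\ln q$, $v'''(0)=-2(\ln q)^3$, and the analogous low-order values, but the coefficient-extraction route avoids repeated use of the chain and product rules.
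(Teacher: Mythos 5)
Your proposal is correct and is exactly the argument the paper intends: the paper gives no explicit proof beyond saying the derivatives follow "straightforwardly" from the expansions of Theorem~\ref{sine-cosine-expand}, and your Taylor-coefficient extraction (with the correct normalization $f^{(n)}(0)=n!\,\pi^{-n}c_n$ accounting for the argument being $\pi z$) supplies precisely that computation. The low-order coefficients you record all check out against the stated formulas.
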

\noindent
We note that the derivatives $\sin_q' 0$  and $\cos_q'' 0$  are also given in Gosper~\cite[p. 101]{Gosper}.
\section{Proof of Theorem \ref{Gosper-sums}}\label{sec:Gosper-sums}

\noindent
(a)\ Observe first the elementary fact that
\[
\sigma^{\ast}_{1(2)}(n) - 2 \sigma^{\ast}_{2(4)}(n)
= \begin{cases}
\sigma^{\ast}_{1(2)}(n) = \sigma (n) & \text{if $n$ is odd} \\
0 & \text{if $n$ is even},
\end{cases}
\]
where $\sigma^{\ast}_{b(a)}(n)$ is defined as in Remark~\ref{rmk:divisor}.
It follows that
\[
\sum_{n=1}^{\infty} \big(\sigma^{\ast}_{1(2)}(n) - 2 \sigma^{\ast}_{2(4)}(n) \big) q^n
= \sum_{n=0}^{\infty} \sigma(2n+1) q^{2n+1}.
\]
On the other hand,
by an identity of Ramanujan, see Berndt~\cite[p. 139]{Berndt-1},
\[
q\psi^4(q^2) = \sum_{n=0}^{\infty}\sigma(2n+1) q^{2n+1}.
\]
Now combine the above with the generating function~(\ref{divisor-generating}) to arrive at
\begin{equation}\label{easy-Gosper}
\sum_{n=1}^{\infty}\frac{q^{2n-1}}{(1-q^{2n-1})^2} - 2 \sum_{n=1}^{\infty}\frac{q^{4n-2}}{(1-q^{4n-2})^2} =
q \psi^4(q^2) = \Pi_{q^2}^2.
\end{equation}
Thus with the help of (\ref{easy-Gosper}) the following three formulas are equivalent:
\[
\sum_{n=1}^{\infty}\frac{q^n}{(1-q^n)^2} - 2 \sum_{n=1}^{\infty}\frac{q^{2n}}{(1-q^{2n})^2} =
\frac{1}{24}\left( \frac{\Pi_q^4}{\Pi_{q^2}^2}-1 \right) + \frac{2}{3}\Pi_{q^2}^2
\]
\[
\sum_{n=1}^{\infty}\frac{q^{2n}}{(1-q^{2n})^2} +
\sum_{n=1}^{\infty}\frac{q^{2n-1}}{(1-q^{2n-1})^2}
- 2 \sum_{n=1}^{\infty}\frac{q^{4n}}{(1-q^{4n})^2} - 2 \sum_{n=1}^{\infty}\frac{q^{4n-2}}{(1-q^{4n-2})^2}
\]
\[ =
\frac{1}{24}\left( \frac{\Pi_q^4}{\Pi_{q^2}^2}-1 \right) + \frac{2}{3}\Pi_{q^2}^2
\]
\[
\frac{1}{24}\left( \frac{\Pi_{q^2}^4}{\Pi_{q^4}^2}-1 \right) + \frac{2}{3}\Pi_{q^4}^2 + \Pi_{q^2}^2
=
\frac{1}{24}\left( \frac{\Pi_q^4}{\Pi_{q^2}^2}-1 \right) + \frac{2}{3}\Pi_{q^2}^2.
\]
The foregoing identity after simplification and rearrangement means
\[
\Pi_q^4 \Pi_{q^4}^2 = \Pi_{q^2}^6 + 16 \Pi_{q^2}^2 \Pi_{q^4}^4 + 8 \Pi_{q^2}^4 \Pi_{q^4}^2
\]
or, equivalently
\[
\left( \Pi_q^2 \Pi_{q^4} \right)^2 = \left( \Pi_{q^2}^3 + 4 \Pi_{q^2} \Pi_{q^4}^2 \right)^2.
\]
But the previous identity holds true since by~(\ref{Pi-124}) we have
\[
\Pi_q^2 \Pi_{q^4}  = \Pi_{q^2}^3 + 4 \Pi_{q^2} \Pi_{q^4}^2.
\]
This completes the proof of part (a). We postpone the proof of (b) as we need parts~(d, e).

\noindent
As to part (c), square both sides of (\ref{q-Double}), differentiate three times with respect
to $z$, and then let $z=0$ to obtain,
\[
128 \sin_q'0 \sin_q^{(3)}0 =
\fr{\Pi_q^2}{4\Pi_{q^2}^2} \Big(8 \sin_{q^4}'0 \sin_{q^4}^{(3)}0 - 24 (\sin_{q^2}'0)^4 \Big)
\]
which by appealing to Corollary~\ref{derivatives-cor}, simplifying, and rearranging reduces to
\[
 \fr{\Pi_q^2 \Pi_{q^2}^4}{\Pi_{q^4}^2} -  \Pi_q^2
=
8 \Pi_q^2 \Big(\sum_{n=1}^{\infty}\fr{q^{2n}}{(1-q^{2n})^2}
- 4 \sum_{n=1}^{\infty}\fr{q^{8n}}{(1-q^{8n})^2} \Big),
\]
or equivalently,
\[
\fr{1}{8}\left(\fr{\Pi_{q^2}^4}{\Pi_{q^4}^2}-1\right) =
\sum_{n=1}^{\infty}\fr{q^{2n}}{(1-q^{2n})^2} - 4 \sum_{n=1}^{\infty}\fr{q^{8n}}{(1-q^{8n})^2},
\]
which is equivalent to the desired formula.

\noindent
(d)\ Differentiating (\ref{q-Triple}) three times with respect to $z$ yields
\[
\begin{split}
27 \sin_q^{(3)} 3z &= \frac{\Pi_q}{3 \Pi_{q^9}} \sin_{q^9}^{(3)} z -
3\left(1+\frac{\Pi_q}{3 \Pi_{q^9}}\right)
\Big(2 (\sin_{q^3}'z)^3 \\
& \qquad\qquad + 6 \sin_{q^3} z \sin_{q^3}'z \sin_{q^3}'' z +
(\sin_{q^2} z)^2 \sin_{q^3}^{(3)} z \Big),
\end{split}
\]
which at $z=0$ boils down to
\[
27 \sin_q^{(3)} 0 = \frac{\Pi_q}{3 \Pi_{q^9}} \sin_{q^9}^{(3)} 0 -
6 \left(1+\frac{\Pi_q}{3 \Pi_{q^9}}\right) (\sin_{q^3}' 0)^3.
\]
Now by an appeal to the relations in Corollary~\ref{derivatives-cor} the foregoing identity implies
\[
- \frac{54 (\ln q)^2 \Pi_q}{\pi^3}
\left(6 +\ln q -24 \ln q \sum_{n=1}^{\infty}\frac{q^{2n}}{(1- q^{2n})^2}\right)
 = 6 \Big(1+\frac{\Pi_q}{3 \Pi_{q^9}} \Big) \frac{216 (\ln q)^3 \Pi_{q^3}^3}{\pi^3}
 \]
 \[
-\frac{54 (\ln q)^2 \Pi_q}{\pi^3}
\Big( 6 + 9 \ln q - 216 \ln q \sum_{n=1}^{\infty}\frac{q^{18n}}{(1- q^{18n})^2} \Big),
\]
which after straightforward simplifications becomes
\[
8 + 24 \sum_{n=1}^{\infty}\frac{q^{2n}}{(1- q^{2n})^2} -
216 \sum_{n=1}^{\infty}\frac{q^{18n}}{(1- q^{18n})^2} =
24 \frac{\Pi_{q^3}^3}{\Pi_q} \Big(1 + \frac{\Pi_q}{\Pi_{q^9}} \Big).
\]
Then dividing both sides by $24$ we arrive at the desired formula.

\noindent
(e)\ Letting $y=2x$ in the formula (\ref{q-Add-3}) we deduce
\[
\sin_{q^3} x \sin_q (3x) = \cos_{q^3} (2x) - \cos_{q^3} x \cos_q (3x).
\]
Differentiating twice with respect to $x$ and letting $x=0$ yield
\[
3 \sin_{q^3}'0 \sin_q'0 + 3 \sin_{q^3}'0 \sin_q'0 = 3 \cos_{q^3}'' 0 - 9 \cos_q'' 0,
\]
which simplifies to
\[
2 \sin_q'0 \sin_{q^3}'0 = \cos_{q^3}'' 0 - 3 \cos_q'' 0.
\]
Then by virtue of Corollary~\ref{derivatives-cor} we obtain
\[
\begin{split}
24 \frac{(\ln q)^2}{\pi^2} \Pi_q \Pi_{q^3} &=
6 \frac{\ln q}{\pi^2} \big(1-12 \ln q \sum_{n=1}^{\infty}\frac{q^{6n-3}}{(1- q^{6n-3})^2} \big) \\
& - 6 \frac{\ln q}{\pi^2} \big(1-4 \ln q \sum_{n=1}^{\infty}\frac{q^{2n-1}}{(1- q^{2n-1})^2} \big),
\end{split}
\]
or equivalently,
\[
\Pi_q \Pi_{q^3} = \sum_{n=1}^{\infty}\frac{q^{2n-1}}{(1- q^{2n-1})^2} - 3
\sum_{n=1}^{\infty}\frac{q^{6n-3}}{(1- q^{6n-3})^2},
\]
as desired.

\noindent
We now prove part~(b). By an appeal to parts (d, e) and Theorem~\ref{Beyond-Gosper} with $q$ replaced by $q^3$
and elementary simplifications we get
\[
\begin{split}
\fr{(\Pi_{q^3}^2 + 3\Pi_{q^9}^2)^2}{12\Pi_{q^3} \Pi_{q^9}} - \fr{1}{12}
& =
\fr{1}{12}\fr{\Pi_{q^3}^3}{\Pi_{q^9}} + \fr{3}{4}\fr{\Pi_{q^9}^3}{\Pi_{q^3}}
+ \fr{1}{2} \Pi_{q^3} \Pi_{q^9} - \fr{1}{12} \\
&=
\fr{1}{12} \left( 3 \sum_{n=1}^{\infty}\fr{q^{2n}}{(1-q^{2n})^2}
- 27 \sum_{n=1}^{\infty}\fr{q^{18n}}{(1-q^{18n})^2} \right) \\
& -\fr{1}{4}\left(\sum_{n=1}^{\infty}\fr{q^{2n}}{(1-q^{2n})^2}
- \sum_{n=1}^{\infty}\fr{q^{6n}}{(1-q^{6n})^2}
- \sum_{n=1}^{\infty}\fr{2 q^{6n-3}}{(1-q^{6n-3})^2} \right) \\
& + \fr{3}{4} \left(\sum_{n=1}^{\infty}\fr{q^{6n}}{(1-q^{6n})^2}
- \sum_{n=1}^{\infty}\fr{q^{18n}}{(1-q^{18n})^2}
- \sum_{n=1}^{\infty}\fr{2 q^{18n-9}}{(1-q^{18n-9})^2} \right) \\
&+ \fr{1}{2} \left(\sum_{n=1}^{\infty}\fr{q^{6n-3}}{(1-q^{6n-3})^2}
- \sum_{n=1}^{\infty}\fr{3 q^{18n-9}}{(1-q^{18n-9})^2} \right) \\
&=
-3 \sum_{n=1}^{\infty} \left(\fr{q^{18n}}{(1-q^{18n})^2}
+ \fr{q^{18n-9}}{(1-q^{18n-9})^2} \right) \\
&+ \sum_{n=1}^{\infty}\left(\fr{q^{6n}}{(1-q^{6n})^2}
+ \fr{q^{6n-3}}{(1-q^{6n-3})^2} \right) \\
&=
\sum_{n=1}^{\infty}\fr{q^{3n}}{(1-q^{3n})^2}
- 3 \sum_{n=1}^{\infty}\fr{q^{9n}}{(1-q^{9n})^2},
\end{split}
\]
which proves part (b).

\noindent
(f)\ Taking the fourth derivative at $z=0$ in (\ref{q-Double-5}) we find
\[
\cos_q^{(4)} 0 = 3 (\cos_q '' 0)^2 + 2 (\sin_q'0)^4.
\]
Then using the derivatives in Corollary~\ref{derivatives-cor} we deduce that
\begin{align*}
\frac{12 (\ln q)^2}{\pi^4} &- \frac{32 (\ln q)^4}{\pi^4} \sum_{n=1}^{\infty}\frac{q^{2n-1}}{(1- q^{2n-1})^2} -
\frac{96 (\ln q)^3}{\pi^4}\sum_{n=1}^{\infty}\frac{q^{2n-1}}{(1- q^{2n-1})^2} \\
&+ \frac{192 (\ln q)^4}{\pi^4} \Bigg( \Big(\sum_{n=1}^{\infty}\frac{q^{2n-1}}{(1- q^{2n-1})^2} \Big)^2
- \sum_{n=1}^{\infty}\frac{q^{4n-2}}{(1- q^{2n-1})^4} \Bigg) \\
=
\frac{12 (\ln q)^2}{\pi^4}& \Bigg( 1+16 (\ln q)^2
\Big(\sum_{n=1}^{\infty}\frac{q^{2n-1}}{(1- q^{2n-1})^2} \Big)^2 -
8 \ln q \sum_{n=1}^{\infty}\frac{q^{2n-1}}{(1- q^{2n-1})^2} \Bigg) \\
&- \frac{32 (\ln q)^4}{\pi^4} \Pi_q ^4,
\end{align*}
or, equivalently
\[
\frac{32 (\ln q)^4}{\pi^4} \Bigg( \sum_{n=1}^{\infty}\frac{q^{2n-1}}{(1- q^{2n-1})^2} +
\sum_{n=1}^{\infty}\frac{q^{4n-2}}{(1- q^{2n-1})^4} \Bigg)
= \frac{32 (\ln q)^4}{\pi^4} \Pi_q^4,
\]
which is clearly equivalent to the desired identity.
\section{Proof of Theorem~\ref{Beyond-Gosper}}\label{sec:Beyond-Gosper}
Differentiating (\ref{q-Triple-2}) three times with respect to $z$ , taking $z=0$, and simplifying yield
\[
27 \sin_q^{(3)} 0 = \fr{\Pi_q}{\Pi_{q^3}} (6\cos_{q^3}'' 0 \sin_{q^3}'' 0 + \sin_{q^3}^{(3)} )
- 6 (\sin_{q^3}' 0)^3.
\]
Now make an appeal to Corollary~\ref{derivatives-cor} and simplify to derive
\[
18\cdot 72 \Pi_q \sum_{n=1}^{\infty}\fr{q^{2n}}{(1-q^{2n})^2}
= 36\cdot 72 \Pi_q \Big( \sum_{n=1}^{\infty}\fr{q^{6n-3}}{(1-q^{6n-3})^2}
 + 18\cdot 72 \sum_{n=1}^{\infty}\fr{q^{6n}}{(1-q^{6n})^2} \Big) + 6^4 \Pi_{q^3}^3.
\]
The foregoing identity reduces to
\[
\sum_{n=1}^{\infty}\Big(\fr{q^{2n}}{(1-q^{2n})^2} - \fr{q^{6n}}{(1-q^{6n})^2}
-\fr{2q^{6n-3}}{(1-q^{6n-3})^2} = \fr{\Pi_{q^3}^3}{\Pi_q},
\]
which completes the proof. 
\section{Proof of Theorem~\ref{main-thm-1}}\label{sec:main-thm-1}
(a)\ 
On the one hand, by Theorem~\ref{Gosper-sums}(d) and Theorem~\ref{Beyond-Gosper} we get
\[
\begin{split}
\fr{\Pi_{q^3}^3}{\Pi_{q^9}}
&= 1 + 3 \sum_{n=1}^{\infty}\Big(\fr{q^{2n}}{(1-q^{2n})^2} - \fr{9 q^{18n}}{(1-q^{18n})^2} \Big)
 - 3\fr{\Pi_{q^3}^3}{\Pi_{q}} \\
&= 1+ 3 \sum_{n=1}^{\infty}\Big(\fr{q^{6n}}{(1-q^{6n})^2} + \fr{2q^{6n-3}}{(1-q^{6n-3})^2}
- \fr{9 q^{18n}}{(1-q^{18n})^2} \Big).
\end{split}
\]
On the other hand, by virtue of (\ref{Ram-1}) with $q$ replaced by $q^3$ we obtain
\[
\fr{\Pi_{q^3}^3}{\Pi_{q^9}}  = \Big(\fr{\psi^3(q^3)}{\psi(q^9)} \Big)^2
= \Big( 1 + 3 \sum_{n=1}^{\infty}
\big(\fr{q^{18n-15}}{1-q^{18n-15}} - \fr{q^{18n-3}}{1-q^{18n-3}} \big) \Big)^2.
\]
Now compare the above to complete the proof.

\noindent
(b)\ 
By Theorem~\ref{Gosper-sums}(f) with $q$ replaced by $q^2$ we derive
\[
\big( q \psi^4(q^2)\big)^2= \Pi_{q^2}^4 = 6 \sum_{n=1}^{\infty}\fr{q^{8n-4}}{(1-q^{4n-2})^4}
+ \sum_{n=1}^{\infty}\fr{q^{4n-2}}{(1-q^{4n-2})^2}.
\]
Now combine the previous formula with (\ref{easy-Gosper}) to conclude the proof.
\section{Proof of Theorem~\ref{main-integrals}}\label{sec:main-integrals}
\noindent
(a)\ From the fact $\Pi_q = q^{\fr{1}{4}} \psi^2(q)$ and Theorem~\ref{Gosper-sums}(f) we have
\[
\psi^8 (q) = 6 \sum_{n=1}^{\infty}\fr{q^{4n-3}}{(1-q^{2n-1})^4} + \sum_{n=1}^{\infty}\fr{q^{2n-2}}{(1-q^{2n-1})^2}.
\]
Moreover, we have the following elementary identities
\[
\int \fr{q^{2n-2}}{(1-q^{2n-1})^2}\, dq = \fr{1}{(2n-1)(1-q^{2n-1})}
\]
and
\[
\int \fr{q^{4n-3}}{(1-q^{2n-1})^4}\, dq = \fr{q^{2n-1}}{3(2n-1)(1-q^{2n-1})^3}-\fr{1}{6(2n-1)(1-q^{2n-1})^2}.
\]
It follows from the above that
\[
\begin{split}
\int \psi^8 (q) \,
&= \sum_{n=1}^{\infty}\fr{1}{2n-1}\Big(\fr{2 q^{2n-1}}{(1-q^{2n-1})^3}-\fr{1}{(1-q^{2n-1})^2}
 +\fr{1}{(1-q^{2n-1})}\Big) \\
&= \sum_{n=1}^{\infty}\fr{1}{2n-1}\fr{2q^{2n-1}-(1-q^{2n-1})+(1-q^{2n-1})^2}{(1-q^{2n-1})^3} \\
&= \sum_{n=1}^{\infty}\fr{q^{2n-1}+q^{4n-2}}{(2n-1)(1-q^{2n-1})^3},
\end{split}
\]
as desired.

\noindent
(b)\ By Theorem~\ref{Gosper-sums}(c) and the definition of $\Pi_q$ we have
 \[
 \fr{1}{8}\fr{\psi^8(q)}{\psi^4(q^2)} - \fr{1}{8}
 =\sum_{n=1}^{\infty}\fr{q^n}{(1-q^n)^2} - 4\sum_{n=1}^{\infty}\fr{q^{4n}}{(1-q^{4n})^2},
 \]
implying that
\[
\fr{1}{8}\fr{\psi^8(q)}{q \psi^4(q^2)} - \fr{1}{8q}
= \fr{d}{dq}\left(\sum_{n=1}^{\infty}\fr{1}{n}\Big(\fr{1}{1-q^n}-\fr{1}{1-q^{4n}} \Big) \right).
\]
Thus after simplification we deduce
\[
\int \fr{\psi^8(q)}{q \psi^4(q^2)} \, dq
= C + \log q + 8\sum_{n=1}^{\infty}\fr{q^n+q^{2n}+q^{3n}}{n(1-q^{4n})}
\]
which proves the first identity of part (b). As to the second formula, by Theorem~\ref{Gosper-sums}(a) we obtain
\[
\fr{1}{24}\fr{\psi^8(q)}{\psi^4(q^2)} - \fr{1}{24} + \fr{2}{3}q \psi^4(q^2)
= \sum_{n=1}^{\infty} \fr{q^n}{(1-q^n)^2} - 2\sum_{n=1}^{\infty}\fr{q^{2n}}{(1-q^{2n})^2}
\]
which with the help of~(\ref{equiv-int-1}) implies that
\[
\fr{1}{24}\fr{\psi^8(q)}{q \psi^4(q^2)} - \fr{1}{24q}
= \fr{d}{dq}\left(\sum_{n=1}^{\infty}\fr{1}{n}\Big(\fr{1}{1-q^n}-\fr{1}{1-q^{2n}} \Big)\right) +\fr{1}{24 q}
\]
\[
- \fr{2}{3} \fr{d}{dq}\left(\sum_{n=1}^{\infty}\fr{1}{2n-1}\fr{q^{2n-1}}{1-q^{4n-2}}\right).
\]
Then
\[
\int \fr{\psi^8(q)}{q \psi^4(q^2)} \, dq
= C + \log q + \sum_{n=1}^{\infty}\left(\fr{24 q^n}{n(1-q^{2n})}-\fr{16 q^{2n-1}}{(2n-1)(1-q^{4n-2})}\right)
\]
as desired.

\noindent
(c)\ From Theorem~\ref{Gosper-sums}(c), we deduce that
\[
\begin{split}
\sum_{n=1}^{\infty}\fr{q^{2n-1}}{(1-q^{2n-1})^2} - 4 \sum_{n=1}^{\infty}\fr{q^{8n-4}}{(1-q^{8n-4})^2}
&= \fr{1}{8}\Big( \fr{\Pi_q^4}{\Pi_{q^2}^2} - \fr{\Pi_{q^2}^4}{\Pi_{q^4}^2} \Big) \\
&= \fr{1}{8} \fr{\psi^8(q)}{\psi^4(q^2)} - \fr{1}{8} \fr{\psi^8(q^2)}{\psi^4(q^4)}.
\end{split}
\]
It follows that
\[
\fr{d}{dq}\sum_{n=1}^{\infty}\fr{1}{2n-1}\Big(\fr{1}{1-q^{2n-1}}- \fr{1}{1-q^{8n-4}}\Big)
= \fr{1}{8} \fr{\psi^8(q)}{q\psi^4(q^2)} - \fr{1}{8} \fr{\psi^8(q^2)}{q\psi^4(q^4)},
\]
that is,
\[
\int \fr{\psi^8(q^2)}{q \psi^4(q^4)} \, dq
= \int \fr{\psi^8(q)}{q \psi^4(q^2)} \, dq - 8\sum_{n=1}^{\infty}\fr{1}{2n-1}\Big(\fr{1}{1-q^{2n-1}}- \fr{1}{1-q^{8n-4}}\Big).
\]
Now use part~(b) and simplify to arrive at the result.

\noindent
(d)\ By an appeal to Theorem~\ref{Beyond-Gosper} we deduce that
\[
q \fr{\psi^6(q^3)}{\psi^2(q)}
= \fr{d}{dq}\sum_{n=1}^{\infty}\Big(\fr{1}{2n(1-q^{2n})}-\fr{1}{6n(1-q^{6n})}+\fr{2}{(6n-3)(1-q^{6n-3})} \Big)
\]
and therefore
\[
\int \fr{q\psi^6(q^3)}{\psi^6(q)} \, dq
= C + \sum_{n=1}^{\infty}\Big(\fr{2+3q^{2n}+3q^{4n}}{6n(1-q^{6n})}-\fr{2}{(6n-3)(1-q^{6n-3})} \Big),
\]
which yields the desired formula.

\noindent
(e)\ By virtue of Theorem~\ref{Gosper-sums}(d) and easy simplification we deduce
\[
\fr{d}{dq}\sum_{n=1}^{\infty} \fr{1}{2n}\Big(\fr{1}{1-q^{2n}}-\fr{1}{1-q^{18n}}\Big)
= q\fr{\psi^6(q^3)}{\psi^2(q)} + \fr{1}{3} \fr{\psi^6(q^3)}{q \psi^2(q^9)} - \fr{1}{3q}
\]
Now proceed as in the previous parts and use part~(d) to conclude the proof.

\noindent
(f)\ Using Theorem~\ref{Gosper-sums}(b), we get
\[
\sum_{n=1}^{\infty}\fr{q^n}{(1-q^n)^2}- 3\sum_{n=1}^{\infty}\fr{q^{3n}}{(1-q^{3n})^2}
= \fr{1}{12}\fr{\psi^6(q)}{\psi^2(q^3)} + \fr{3}{4}\fr{q^2 \psi^6(q^3)}{\psi^2(q)}
+ q \psi^2(q)\psi^2(q^3) - \fr{1}{12},
\]
implying that
\[
\fr{d}{dq}\sum_{n=1}^{\infty} \fr{1}{n}\Big(\fr{1}{1-q^{n}}-\fr{1}{1-q^{3n}}\Big)
= \fr{1}{12}\fr{\psi^6(q)}{q\psi^2(q^3)} + \fr{3}{4}\fr{q \psi^6(q^3)}{\psi^2(q)}
+  \psi^2(q)\psi^2(q^3) - \fr{1}{12q}.
\]
Now combine part~(d) with (\ref{equiv-int-2}) and simplify to establish the desired integral.
%

%
\end{document}